\date{}
\title{Upper bound for the first non-zero eigenvalue of the $p$-Laplacian}
\author{Sheela Verma\\Department of Mathematics \& Statistics\\ Indian Institute of
Technology Kanpur \\ Kanpur, India\\ sheela@iitk.ac.in}
\newtheorem{thm}{Theorem}
\newtheorem{lemma}{Lemma}
\newtheorem{rmk}[thm]{Remark}
\begin{document}
\maketitle
\begin{abstract}

 Let $M$ be a closed hypersurface in $\mathbb{R}^{n}$ and $\Omega$ be a bounded domain such that $M= \partial\Omega$. In this article, we obtain an upper bound for the first non-zero eigenvalue of the following problems.
 \begin{itemize}
\item Closed eigenvalue problem:
\begin{align*}
\Delta_p u = \lambda_{p} \ |u|^{p-2} \ u   \qquad \mbox{ on } \quad {M}.
\end{align*}
\item Steklov eigenvalue problem:
 \begin{align*}
\begin{array}{rcll}
 \Delta_{p}u &=& 0 & \mbox{ in } \Omega ,\\
|\nabla u|^{p-2} \frac{\partial u}{\partial \nu} &=& \mu_{p} \ |u|^{p-2} \ u &\mbox{ on } M .
\end{array}
\end{align*}
\end{itemize}

\end{abstract}
\textbf{Keywords:} p-Laplacian, Closed eigenvalue problem, Steklov eigenvalue problem, Center-of-mass.\\
\textbf{Mathematics subject classification:} 35P15, 58J50.\\

\section{Introduction}
The $p$-Laplace operator, defined as $ \Delta_{p} u := - \text{div}\left( {|\nabla u|^{p-2}} \nabla u \right)$, is the nonlinear generalization of the usual Laplace operator. 

Many interesting results, providing the sharp upper bounds for the first non zero eigenvalue of the usual Laplacian $\left( p=2\right) $ have been obtained. In \cite{BW}, Bleecker and Weiner obtained a sharp upper bound of the first non-zero eigenvalue of Laplacian in terms of the second fundamental form on a hypersurface $M$ in $\mathbb{R}^n$. In \cite{R}, Reilly gave an upper bound for the first non-zero eigenvalue in terms of higher order mean curvatures for a compact $n$-dimensional manifold isometrically immersed in $\mathbb{R}^{n+p}$, which improves the earlier estimate. This result was later extended to submanifolds of simply connected space forms in various ways  ( see \cite{EH, JFG}). These upper bounds are extrinsic in the sense that they depend either on the length of the second fundamental form or the higher order mean curvatures of $M$. 

Let $M$ be a hypersurface in a rank-$1$ symmetric space. In \cite{GS2}, an  upper bound for the first non-zero eigenvalue of $M$ was obtained in terms of the integral of the first non-zero eigenvalue of the geodesic spheres centered at the centre of gravity of $M$.

For a closed hypersurface $M$ contained in a ball of radius less than
$\frac{i(\mathbb{M}(k))}{4}$ and bounding a convex domain $Ω$ such that $\partial\Omega=M$ in the simply connected space form
$\mathbb{M}(k)$, $k=0$ or $1$, Santhanam \cite{GS1} proved that
\begin{align*}
\frac{\lambda_{1}(M)}{\lambda_{1}(S(R))} \leq \frac{\text{Vol}(M)}{\text{Vol}(S(R))},
\end{align*}
where $S(R)\left( = \partial B(R)\right) $ is the geodesic sphere of radius $R>0$ such that Vol$(B(R))$ = Vol$(\Omega)$. A similar result was also obtained for $k= -1$.

In this article, we extend the results in \cite{GS1} to $p$-Laplacian for a closed hypersurface $M \subset \mathbb{R}^n$. In particular, we consider the closed eigenvalue problem 
\begin{align}
\label{closedeigenvalueproblem}
\Delta_{p}u = \lambda_{p}\, |u|^{p-2}\,u \quad \mbox{ on } M,
\end{align} 
 where $M$ is a closed hypersurface in $\mathbb{R}^n$ and find an upper bound for the first non-zero eigenvalue of this problem.


Let ${M}$  be a closed hypersurface in $\mathbb{R}^n$ and $\Omega$ be the bounded domain such that ${M}  = \partial \Omega$. Consider the following problem
\begin{align*} 
\begin{array}{rcll}
\Delta f &=& 0 & \mbox{ in } \Omega ,\\
\frac{\partial f}{\partial \nu} &=& \mu f  &\mbox{ on } \partial \Omega,
\end{array}
\end{align*}
where $\nu$ is the outward unit normal on the boundary $\partial \Omega$ and $\mu$ is a real number.
This problem is known as Steklov eigenvalue problem and was introduced by Steklov \cite{S} for bounded domains in the plane in $1902.$ This problem is important as the set of eigenvalues of the Steklov problem is same as the set of eigenvalues of the well known Dirichlet-Neumann map.

There are several results which estimate the first non-zero eigenvalue $\mu_1$ of the Steklov eigenvalue problem \cite{P, E1, E2, E3, BGS}. 
The first isoperimetric upper bound for $\mu_{1}$ was given by Weinstock \cite{W} in $1954$. He proved that among all simply connected planar domains with analytic boundary of fixed perimeter, the circle maximizes $\mu_{1}$. In \cite{P}, Payne obtained a two sided bound for the first non-zero Steklov eigenvalue on a convex plain domain in terms of minimum and maximum curvature. The lower bound in \cite{P} has been generalized by Escobar \cite{E1} to $2$-dimensional compact manifold with non-negative Gaussian curvature. Using the Weinstock inequality, Escobar \cite{E2} proved that for a fix volume, among all bounded simply connected domain in $2$-dimensional simply connected space forms, geodesic balls maximize the first non-zero Steklov eigenvalue. This result has been extended to non-compact rank-$1$ symmetric spaces in \cite{BGS}. We prove the similar result for the first non-zero eigenvalue of the eigenvalue problem
\begin{align} \label{stekloveigenvalueproblem}
\begin{array} {rcll}
 \Delta_{p}u &=& 0 & \mbox{ in } \Omega, \\
|\nabla u|^{p-2} \frac{\partial u}{\partial \nu} &=& \mu_{p} \ |u|^{p-2} \ u &\mbox{ on } M,
\end{array}
\end{align}
where $\Omega$ is a bounded domain in $\mathbb{R}^n$ such that ${M}  = \partial \Omega$ and $\nu$ is outward unit normal on $M$. 

In Section $2$, we state our main results. In section $3$, we state some basic facts about the first non-zero eigenvalues of problem \eqref{closedeigenvalueproblem} and \eqref{stekloveigenvalueproblem}, and prove some results which will be required in the later sections. Followed by this, in section $4, 5$ and $6$, we provide the proof of results stated in section $2$.

\section{Statement of the results}
We state a variation of centre of mass theorem. This is crucial for our proof of main results.
\begin{thm}
\label{thm:test function}
Let $\Omega$ be a bounded domain in $\mathbb{R}^n$ and $M = \partial \Omega$. Then for every real number $1<p< \infty$, there exists a point $t \in \overline{\Omega} $ depending on $p$ and normal coordinate system $\left(X_1,X_2,\ldots,X_n \right)$ centered at $t$ such that for $ 1 \leq i \leq n$,
\begin{align*}
\int_M{|X_i|^{p-2} X_i} = 0.
\end{align*}

\end{thm}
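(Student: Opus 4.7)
My plan is to realize $t$ as the unique minimizer of a strictly convex, coercive functional on $\R^n$ and then to read off the desired coordinate system from the Euler--Lagrange equation. Fix an orthonormal basis of $\R^n$, write the associated Cartesian coordinates as $(x_1,\ldots,x_n)$, and for $t=(t_1,\ldots,t_n)\in\R^n$ set
\[
\Phi(t):=\frac{1}{p}\sum_{i=1}^{n}\int_M |x_i-t_i|^p\,dV_M.
\]

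First I would establish the analytic properties of $\Phi$. Because the scalar map $s\mapsto \tfrac{1}{p}|s|^p$ is $C^1$ and strictly convex for every $p>1$, with continuous derivative $|s|^{p-2}s$, the functional $\Phi$ inherits these properties in $t$: it is $C^1$, strictly convex, and coercive ($\Phi(t)\to+\infty$ as $|t|\to\infty$, because $\mathrm{Vol}(M)>0$). Consequently $\Phi$ attains its global minimum at a unique point $t^{*}\in\R^n$. Differentiating under the integral sign---which is justified by dominated convergence since the partial derivative $-|x_i-t_i|^{p-2}(x_i-t_i)$ is locally bounded in $t$ uniformly in $x\in M$, even for $p\in(1,2)$---the stationarity condition $\nabla\Phi(t^{*})=0$ reads
\[
\int_M |x_i-t_i^{*}|^{p-2}(x_i-t_i^{*})\,dV_M = 0, \qquad i=1,\ldots,n.
\]
Setting $X_i(x):=x_i-t_i^{*}$ then furnishes the required Cartesian (hence, on flat Euclidean space, normal) coordinate system centred at $t^{*}$ with the stated orthogonality property.

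The main obstacle I foresee is the last step, namely verifying that $t^{*}\in\overline{\Omega}$. A one-dimensional monotonicity argument immediately places $t^{*}$ in the axis-aligned bounding box of $M$: for each $i$ the strictly decreasing function $s\mapsto \int_M |x_i-s|^{p-2}(x_i-s)\,dV$ has its unique zero $t_i^{*}$ inside $[\inf_M x_i,\sup_M x_i]$, because the integrand has a definite sign whenever $s$ leaves this interval. Upgrading ``bounding box'' to $\overline{\Omega}$ will exploit the remaining freedom in choosing the orthonormal frame underlying the coordinates $x_i$: if for one frame the minimizer lies outside $\overline{\Omega}$, I would look for a rotation that pulls it back in---either produced by a separating-hyperplane argument (when $\Omega$ is convex, so that a hyperplane separating $t^{*}$ from $\overline{\Omega}$ can be aligned with a coordinate axis to contradict the vanishing integral), or by a Brouwer/degree-theoretic argument applied to the continuous map $R\mapsto t^{*}(R)$ from $\mathrm{SO}(n)$ to $\R^n$ in the general case. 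Carrying out this last topological/geometric step rigorously is where the real work of the proof will lie.
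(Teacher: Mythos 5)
Your construction is the same as the paper's: you minimize the functional $\Phi(t)=\frac1p\sum_i\int_M|x_i-t_i|^p$ and read the identity $\int_M|X_i|^{p-2}X_i=0$ off the first-order condition, and that part of your argument (smoothness of $s\mapsto|s|^p$ for $p>1$, strict convexity, coercivity, differentiation under the integral) is correct and in fact cleaner than the paper's. The genuine gap is the one you yourself flag: you never prove $t^*\in\overline{\Omega}$, and without that the theorem is not established, since the later arguments (Lemma~\ref{lem:lemma1} in particular) need the base point to lie in $\overline{\Omega}$. The paper evades this differently --- it minimizes $f$ over the compact set $\overline{\Omega}$, so its minimizer lies in $\overline{\Omega}$ by construction --- but it then asserts $(\nabla f)_t=0$, which is only valid if that constrained minimizer is an interior point or coincides with your unconstrained minimizer $t^*$; the paper offers no argument for this. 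So the two proofs have exactly complementary holes: the paper gets the location for free and the criticality for free of charge but unjustified; you get the criticality rigorously and the location unjustified.

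I would also caution that your proposed repairs are unlikely to close the gap in the stated generality. For $p=2$ the functional is $\frac12\int_M|x-t|^2$, which is invariant under rotation of the frame, so the ``remaining freedom in choosing the orthonormal frame'' buys you nothing there: the unique point satisfying $\int_M X_i=0$ for all $i$ is the centroid of $M$ with respect to surface measure, independent of the frame. And for a non-convex $\Omega$ such as a solid torus in $\R^3$, that centroid is the center of the hole, which does not lie in $\overline{\Omega}$. So the containment $t^*\in\overline{\Omega}$ is simply false for general bounded domains, and any correct proof must impose additional hypotheses (e.g.\ convexity or star-shapedness of $\Omega$, under which your separating-hyperplane argument does work: if a hyperplane $\{x_1=c\}$ strictly separates $t^*$ from $\overline{\Omega}\supseteq M$, then $x_1-t_1^*$ has constant sign on $M$ and the first-order condition fails). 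Your instinct that ``this is where the real work lies'' is right; the point is that the work cannot be completed as the theorem is stated, and the paper's own proof silently skips the same obstruction.
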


Now we state our main results.

The following theorem provides an upper bound for the first non-zero eigenvalue $\lambda_{1,p}$ of the closed eigenvalue problem \eqref{closedeigenvalueproblem}.
\begin{thm}
 \label{thm:closedfe}
Let $M$ be a closed hypersurface in $\mathbb{R}^n$ bounding a bounded domain $\Omega$. Let $R>0$ be such that $\text{Vol }(\Omega)=\text{Vol }(B(R))$, where $B(R)$ is a ball of radius $R$. Then the first non-zero eigenvalue $\lambda_{1,p}$ of the closed eigenvalue problem~\eqref{closedeigenvalueproblem} satisfies 
\begin{align}
\label{eqn:closedfe1}
\lambda_{1,p} \leq {n}^\frac{\vert p-2 \vert}{2} \, {\lambda_1(S(R))}^\frac{p}{2} \, \left(\frac{\text{ Vol }(M)}{\text{ Vol }(S(R))}\right).
\end{align}

Furthermore, for $p=2$, the  upper bound (\ref{eqn:closedfe1}) is sharp and the equality holds if and only if $M$ is a geodesic sphere of radius $R$ ( see \cite{GS1}).

  If equality holds in \eqref{eqn:closedfe1} then $M$ is a geodesic sphere and $p=2$.
\end{thm}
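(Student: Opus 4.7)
The plan is to test the Rayleigh quotient for $\lambda_{1,p}$ against the Cartesian coordinate functions furnished by Theorem~\ref{thm:test function}, pass between $\ell^p$ and $\ell^2$ sums via power-mean estimates, and close the argument with a moment inequality obtained from the divergence theorem together with symmetric rearrangement.

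By Theorem~\ref{thm:test function} there exist a point $t\in\overline{\Omega}$ and Cartesian coordinates $(X_1,\ldots,X_n)$ based at $t$ for which $\int_M |X_i|^{p-2}X_i\,dA=0$, so each $X_i$ is admissible in the variational characterisation of $\lambda_{1,p}$ on $M$ and
\[
\lambda_{1,p}\int_M |X_i|^p\,dA \le \int_M |\nm X_i|^p\,dA.
\]
Since $\nm X_i = e_i-\langle e_i,\nu\rangle\nu$, one has $|\nm X_i|^2 = 1-\langle e_i,\nu\rangle^2 \le 1$, so $\sum_i|\nm X_i|^2 = n-1$ and $\sum_i X_i^2 = r^2$ with $r:=|X|$ the distance to $t$. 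I would then sum the above inequalities over $i$ and apply power-mean/Jensen estimates in opposite directions depending on $p$: for $p\ge 2$, convexity of $s\mapsto s^{p/2}$ gives $\sum_i|X_i|^p \ge n^{1-p/2}r^p$ while $|\nm X_i|\le 1$ gives $\sum_i|\nm X_i|^p\le n-1$; for $p\le 2$, the reverse monotonicity of $\ell^p$-norms gives $\sum_i|X_i|^p\ge r^p$ while concavity of $s\mapsto s^{p/2}$ gives $\sum_i|\nm X_i|^p\le n^{1-p/2}(n-1)^{p/2}$. Absorbing $(n-1)\le(n-1)^{p/2}$ into the $p\ge2$ regime, both cases combine into
\[
\lambda_{1,p}\int_M r^p\,dA \le n^{|p-2|/2}(n-1)^{p/2}\,\text{Vol}(M).
\]

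The technical core that I expect to be the main obstacle is the moment inequality $\int_M r^p\,dA\ge R^p\,\text{Vol}(S(R))$. My plan is to apply the divergence theorem to the radial vector field $F(X)=r^{p-1}X$: a direct computation gives $\text{div}\,F=(n+p-1)r^{p-1}$, and hence
\[
(n+p-1)\int_\Omega r^{p-1}\,dV \;=\; \int_M r^{p-1}\langle X,\nu\rangle\,dA \;\le\; \int_M r^p\,dA,
\]
where the last inequality uses $|\langle X,\nu\rangle|\le r$ and $r^{p-1}\ge 0$. The Hardy--Littlewood rearrangement inequality, applied to the radially increasing integrand $r^{p-1}$, shows that $\int_\Omega r^{p-1}\,dV$ is minimised, among sets of fixed volume, by the ball $B(R)$ centred at $t$, where it equals $n\omega_n R^{n+p-1}/(n+p-1)$. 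Combining yields the required moment estimate. Substituting into the previous display and using $\lambda_1(S(R))=(n-1)/R^2$ recovers~\eqref{eqn:closedfe1}.

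For the equality clause, I would trace equality back through each step: equality in $|\langle X,\nu\rangle|\le r$ pointwise on $M$ forces $M$ to be a sphere centred at $t$, equality in the rearrangement forces $\Omega = B(R)$, and equality in the pointwise power-mean step requires $|X_1|=\cdots=|X_n|$ everywhere on $M$, which on a sphere can hold identically only when $p=2$ (the case where the power-mean inequality is trivial). Hence equality forces $p=2$ and $M$ to be the geodesic sphere $S(R)$.
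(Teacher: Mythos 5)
Your proposal is correct, and its overall architecture coincides with the paper's: test the Rayleigh quotient on the coordinate functions supplied by Theorem~\ref{thm:test function}, compare the $\ell^p$ and $\ell^2$ sums of $|x_i|$ and $\|\nabla^M x_i\|$ by power-mean inequalities in the two regimes $1<p\le 2$ and $p\ge 2$ (your constant $n^{|p-2|/2}(n-1)^{p/2}$ matches the paper's after you weaken $n-1$ to $(n-1)^{p/2}$), and close with the moment inequality $\int_M r^p\,dA\ge R^p\,\text{Vol}(S(R))$ and $\lambda_1(S(R))=(n-1)/R^2$. The one place where you genuinely diverge is the proof of that moment inequality, which is the paper's Lemma~\ref{lem:lemma1}: the paper parametrizes the ``outermost'' part of $M$ over the unit sphere and uses $\sec\theta\ge 1$ to reach $\int_M r^p \ge (n+p-1)\int_\Omega r^{p-1}\,dV$, then compares $\Omega$ with $B(R)$ by an explicit cut-and-paste of the regions $\Omega\setminus B(R)$ and $B(R)\setminus\Omega$; you obtain the same intermediate inequality more cleanly from the divergence theorem applied to $F=r^{p-1}X$ together with $\langle X,\nu\rangle\le r$, and then invoke the bathtub/rearrangement principle, which is exactly the content of the paper's cut-and-paste step. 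Your route is shorter and sidesteps the measure-theoretic care needed for the set $A$ of farthest intersection points, while the equality analysis ($\nu=\partial r$ on $M$ and $\text{Vol}(B(R)\setminus\Omega)=0$, forcing $\Omega=B(t,R)$) comes out the same either way. One small correction to the rigidity discussion: for $1<p<2$ the binding pointwise constraint is not $|X_1|=\cdots=|X_n|$ but equality in the $\ell^p$-versus-$\ell^2$ comparison, which forces all but one coordinate to vanish at each point of $M$; this is equally impossible on a sphere, so the conclusion that $p=2$ still follows.
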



In case of Steklov eigenvalue problem, we have the following upper bound for the first non-zero eigenvalue $\mu_{1,p}$.
\begin{thm}
\label{thm:steklovfe}
Let $\Omega$ be a bounded domain in $\mathbb{R}^n$ with smooth bounday $M$ and $R>0$ be such that $\text{ Vol }(\Omega)=\text{ Vol }(B(R))$, where $B(R)$ is a ball of radius $R$. Then the first non-zero eigenvalue $\mu_{1,p}$ of problem \eqref{stekloveigenvalueproblem} satisfies the following inequality.
\begin{itemize}
\item For $ 1<p<2$,
\begin{align}
\label{eqn:steklovfe2}
\mu_{1,p} \leq \frac{1}{R^{p-1}}.
\end{align}
\item For $p \geq 2$,
\begin{align}
\label{eqn:steklovfe1}
\mu_{1,p} \leq \frac{{n}^{p-2}}{R^{p-1}}.
\end{align}
\end{itemize} 

Furthermore, for $p=2$, equality holds in (\ref{eqn:steklovfe2}) and (\ref{eqn:steklovfe1}) iff $M$ is a geodesic sphere of radius $R$ (see \cite{BGS}).

If equality holds in \eqref{eqn:steklovfe2} and \eqref{eqn:steklovfe1} then $M$ is a geodesic sphere of radius $R$ and $p=2$.
\end{thm}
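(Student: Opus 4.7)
I would start from the variational characterization
\begin{equation*}
\mu_{1,p}=\inf\Bigl\{\tfrac{\int_{\Omega}|\nabla u|^{p}\,dV}{\int_{M}|u|^{p}\,dA}\;:\; u\in W^{1,p}(\Omega),\ \int_{M}|u|^{p-2}u\,dA=0\Bigr\}
\end{equation*}
and use Theorem~\ref{thm:test function} to produce a point $t\in\overline\Omega$ and Euclidean coordinates $(X_{1},\dots,X_{n})$ at $t$ for which every $X_{i}$ satisfies the orthogonality constraint. Since $\nabla X_{i}$ has constant unit norm, $X_{i}$ is admissible and the Rayleigh quotient immediately yields
\begin{equation*}
\mu_{1,p}\int_{M}|X_{i}|^{p}\,dA\;\le\; \int_{\Omega}|\nabla X_{i}|^{p}\,dV\;=\;\mathrm{Vol}(\Omega)\;=\;\mathrm{Vol}(B(R)),\qquad i=1,\dots,n.
\end{equation*}

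Summing these $n$ inequalities and passing from $\sum_{i}|X_{i}|^{p}$ to $|X|^{p}=(\sum_{i}X_{i}^{2})^{p/2}$ through $\ell^{p}$--$\ell^{2}$ norm equivalence on $\mathbb{R}^{n}$ is what splits the two regimes. For $1<p<2$ the $\ell^{p}$-norm dominates the $\ell^{2}$-norm, so $\sum_{i}|X_{i}|^{p}\ge|X|^{p}$; for $p\ge 2$ the power-mean inequality gives $\sum_{i}|X_{i}|^{p}\ge n^{1-p/2}|X|^{p}$. This reduces the theorem to a geometric lower bound on $\int_{M}|X|^{p}\,dA$ in terms of $R$ and $\mathrm{Vol}(\Omega)$, which is responsible for the factor $R^{p-1}$ and the dimensional constants appearing in \eqref{eqn:steklovfe2} and \eqref{eqn:steklovfe1}.

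Producing that lower bound is the main analytic obstacle. The route I would take pairs the divergence identity $\mathrm{div}(|X|^{p-1}X)=(n+p-1)|X|^{p-1}$ with $\langle X,\nu\rangle\le|X|$ on $M$ to obtain
\begin{equation*}
(n+p-1)\int_{\Omega}|X|^{p-1}\,dV\;\le\;\int_{M}|X|^{p}\,dA,
\end{equation*}
and then controls the bulk integral by Hardy--Littlewood--Pólya symmetric decreasing rearrangement centered at $t$: because $r\mapsto r^{p-1}$ is increasing, $\int_{\Omega}|X|^{p-1}\,dV$ is minimized, among domains of volume $\mathrm{Vol}(B(R))$, by the ball of radius $R$ centered at $t$, with the explicit polar value $n\omega_{n}R^{n+p-1}/(n+p-1)$. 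Reassembling the constants from these pieces, separately in the two regimes, delivers \eqref{eqn:steklovfe2} and \eqref{eqn:steklovfe1}.

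For the rigidity assertion, I would chase equality through each step. Equality in the power-mean inequality for $p>2$ would require $|X_{1}|=\cdots=|X_{n}|$ on $M$, impossible on a closed hypersurface, so equality forces $p=2$. Equality in $\langle X,\nu\rangle=|X|$ forces $X$ to be parallel to $\nu$ along $M$, and equality in the rearrangement step forces $\Omega$ to coincide with the ball of radius $R$ centered at $t$; together these identify $M$ with $S(R)$, recovering the known rigidity at $p=2$ established in \cite{BGS}.
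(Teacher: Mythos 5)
Your proposal is correct and follows the same overall strategy as the paper: the center-of-mass point from Theorem~\ref{thm:test function}, the coordinate functions as test functions, the $\ell^{p}$--$\ell^{2}$ comparison split at $p=2$, and a lower bound for $\int_{M}r^{p}\,dA$ by $R^{p}\,\mathrm{Vol}(S(R))$. The one genuinely different ingredient is how that lower bound is obtained. The paper's Lemma~\ref{lem:lemma1} parametrizes the outermost part of $M$ over $\mathbb{S}^{n-1}$ and uses $\sec\theta\ge 1$ before reducing to $(n+p-1)\int_{\Omega}r^{p-1}\,dV$; you instead apply the divergence theorem to $|X|^{p-1}X$ together with $\langle X,\nu\rangle\le|X|$ on $M$, which reaches the same inequality $(n+p-1)\int_{\Omega}r^{p-1}\,dV\le\int_{M}r^{p}\,dA$ with less geometric bookkeeping, and both arguments then finish with the identical bathtub comparison of $\Omega$ against $B(t,R)$. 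A second, smaller difference: you keep the exact identity $\int_{\Omega}\|\nabla X_{i}\|^{p}\,dV=\mathrm{Vol}(\Omega)$, so summing gives $n\,\mathrm{Vol}(\Omega)$ on the right; for $p\ge 2$ your constants then assemble to $\mu_{1,p}\le n^{\frac{p}{2}-1}/R^{p-1}$, which is sharper than \eqref{eqn:steklovfe1} (the paper loses a factor by estimating $\sum_{i}\|\nabla x_{i}\|^{p}\le n^{p/2}$ via Lemma~\ref{lem:lemma2} rather than using that this sum equals $n$), and of course implies the stated bound. Your rigidity discussion mirrors the paper's: equality in the power-mean step for $p>2$ forces $|X_{1}|=\dots=|X_{n}|$ on $M$, which is impossible for a closed hypersurface, and equality in $\langle X,\nu\rangle=|X|$ together with the volume comparison identifies $\Omega$ with $B(t,R)$.
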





\section{Preliminaries}
In this section, we state some basic facts about the first non-zero eigenvalue of the eigenvalue problems \eqref{closedeigenvalueproblem} and \eqref{stekloveigenvalueproblem}. We will also prove some results that are needed in subsequent sections. 

Let $u_1$ be an eigenfunction corresponding to the eigenvalue $\lambda_p$ of closed eigenvalue problem \eqref{closedeigenvalueproblem} and $u_2$ be an eigenfunction corresponding to the eigenvalue $\mu_{p}$ of the Steklov eigenvalue problem \eqref{stekloveigenvalueproblem}. Then  $\lambda_p$ and $\mu_{p}$ satisfy
\begin{align*}
\lambda_{p} \int_M  {|{u_1}|^p} = \int_M   \|\nabla^{M}  u_1\|^p,
\end{align*}
\begin{align*}
\mu_{p} \int_M  {|{u_1}|^p} = \int_\Omega   \|\nabla  u_1\|^p.
\end{align*}
This shows that all eigenvalues of problems \eqref{closedeigenvalueproblem} and \eqref{stekloveigenvalueproblem} are non-negative.

Let $\lambda_{1,p}$ and $\mu_{1,p}$ be the first non-zero eigenvalues of the closed and steklov eigenvalue problems, respectively. Then the variational characterization for $\lambda_{1,p}$ and $\mu_{1,p}$ is given by
\begin{align*}
\lambda_{1,p}= \inf  \left\lbrace \frac{\int_{M}{\|\nabla^{M} u\|^p}}{\int_M{|u|^p}} : \int_M{|u|^{p-2} u} =0,  u(\neq 0) \in C^1(M)\right\rbrace,
\end{align*}
\begin{align*}
\mu_{1,p}= \inf \left\lbrace \frac{\int_\Omega{\|\nabla u\|^p}}{\int_M{|u|^p}} : \int_M{|u|^{p-2} u} =0,  u(\neq 0) \in C^1(\Omega)\right\rbrace.
\end{align*}

\begin{rmk} If $p=2$, then the condition $\int_M{|u|^{p-2} u} = \int_M{u} = 0 $ is equivalent to say that the test function must be orthogonal to the constant function in $L^2$-\text{norm}.
\end{rmk}

Let $M$ be a closed hypersurface in $\mathbb{R}^n$ and $\Omega$ be a bounded domain in $\mathbb{R}^n$ such that $M = \partial\Omega$. Fix a point $q \in \Omega$. Then for every point $s \in M$, the line joining $q$ and $s$ may intersect $M$ at some points other than $s$. For every point $s \in M$, let $r(s):=d(q,s)$ and for every $u \in {\mathbb{S}^{n-1}}$, let
$\beta(u):= \text{ max } \left\lbrace \beta >0 | \, {q+\beta u} \in M , \,  \beta \in \mathbb{R}\right\rbrace$. Let $A := \left\lbrace {q+ \beta(u)u}| \, u \in {\mathbb{S}^{n-1}} \right\rbrace$. Then $A \subseteq M$.
\begin{lemma}
\label{lem:lemma1}
Let $\Omega$ be a bounded domain in $\mathbb{R}^n$ with smooth boundary $M$ and $R>0$ be such that $\text{ Vol }(\Omega)=\text{Vol }(B(R))$, where $B(R)$ is a ball of radius $R$. Fix a point $q\in \overline{\Omega}$, then
\begin{align}
\label{eqn:lemma1}
\int_M {r^{p}(s)} \, ds \geq {R^{p}} \text{ Vol }(S(R)).
\end{align}
Further, equality holds in \eqref{eqn:lemma1} iff $M$ is a geodesic sphere of radius $R$ centered at $q$.
\end{lemma}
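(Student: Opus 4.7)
The plan is to push all integrals on $M$ down to the unit sphere $\mathbb{S}^{n-1}$ via the radial projection from $q$, and then reduce the bound to a scalar inequality that follows from polar coordinates together with Jensen's inequality.

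Using polar coordinates centred at $q$, for $u \in \mathbb{S}^{n-1}$ set $E(u) := \{t \geq 0 : q + tu \in \Omega\}$. By the definition of $\beta(u)$ one has $E(u) \subseteq [0, \beta(u)]$, so
\begin{align*}
\text{Vol}(B(R)) = \text{Vol}(\Omega) = \int_{\mathbb{S}^{n-1}}\int_{E(u)} t^{n-1}\,dt\,du \leq \frac{1}{n}\int_{\mathbb{S}^{n-1}}\beta(u)^n\,du,
\end{align*}
whence $\int_{\mathbb{S}^{n-1}} \beta(u)^n\,du \geq R^n\,\text{Vol}(\mathbb{S}^{n-1})$. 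Next I would apply the area formula to the radial projection $\pi: M \to \mathbb{S}^{n-1}$, $\pi(s) = (s-q)/|s-q|$, whose Jacobian equals $|\cos\theta(s)|/r(s)^{n-1}$, where $\theta(s)$ is the angle between the outward unit normal at $s$ and the radial direction $(s-q)/r(s)$. Taking $h(s) = r(s)^{n-1+p}$ and using that $\beta(u) = \max\{r(s) : s \in \pi^{-1}(u)\}$, this gives
\begin{align*}
\int_M r(s)^p\,ds &\geq \int_M r(s)^p\,|\cos\theta(s)|\,ds \\
&= \int_{\mathbb{S}^{n-1}}\sum_{s \in \pi^{-1}(u)} r(s)^{n-1+p}\,du \\
&\geq \int_{\mathbb{S}^{n-1}} \beta(u)^{n-1+p}\,du.
\end{align*}

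Since $p > 1$, the map $t \mapsto t^{(n-1+p)/n}$ is convex on $[0,\infty)$, so Jensen's inequality on $\mathbb{S}^{n-1}$ combined with the earlier lower bound yields
\begin{align*}
\int_{\mathbb{S}^{n-1}}\beta(u)^{n-1+p}\,du &\geq \text{Vol}(\mathbb{S}^{n-1}) \left(\frac{1}{\text{Vol}(\mathbb{S}^{n-1})}\int_{\mathbb{S}^{n-1}}\beta(u)^n\,du\right)^{\!(n-1+p)/n} \\
&\geq R^{n-1+p}\,\text{Vol}(\mathbb{S}^{n-1}) = R^p\,\text{Vol}(S(R)),
\end{align*}
which is the desired inequality.

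For the equality discussion I would trace back each step: equality forces $\beta(u) \equiv R$ (Jensen), each ray from $q$ to meet $M$ in exactly one point (the sum over $\pi^{-1}(u)$ collapsing to a single term), and $|\cos\theta(s)| \equiv 1$ (discarding the cosine factor), so the outward normal is radial at every point of $M$. The first two conditions already force $M$ to coincide with the sphere of radius $R$ centred at $q$, and the converse is immediate. The step I expect to be most technical is the clean application of the area formula when $\pi$ is not injective (so that $M$ is \emph{multi-layered} over some directions) and the corner case $q \in \partial\Omega$ in which $r$ vanishes at one point; both can be handled via the general area formula for Lipschitz maps together with a standard approximation argument from interior base points.
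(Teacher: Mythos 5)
Your argument is correct, and its second half takes a genuinely different route from the paper's. The first half is the same in substance: both proofs push the integral down to $\mathbb{S}^{n-1}$ by radial projection from $q$, discard the angle factor ($|\cos\theta|\le 1$ in your version, $\sec\theta\ge 1$ on the outermost sheet $A$ in the paper's), and retain only the farthest intersection point $\beta(u)$ on each ray, arriving at $\int_M r^p\,ds \ge \int_{\mathbb{S}^{n-1}}\beta(u)^{n+p-1}\,du$. From there the paths diverge. The paper writes $\beta(u)^{n+p-1}=(n+p-1)\int_0^{\beta(u)}r^{n+p-2}\,dr$, so the angular integral becomes $(n+p-1)$ times a solid integral of $r^{p-1}$ over the star-shaped hull of $\Omega$, which dominates $(n+p-1)\int_\Omega r^{p-1}\,dV$; it then compares $\Omega$ with $B(R)$ by the rearrangement observation that $r\ge R$ on $\Omega\setminus B(R)$ and $r\le R$ on $B(R)\setminus\Omega$, the two leftover regions having equal volume. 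You instead encode the volume constraint as $\int_{\mathbb{S}^{n-1}}\beta(u)^n\,du\ge R^n\,\mathrm{Vol}(\mathbb{S}^{n-1})$ and finish with Jensen's inequality for the convex function $t\mapsto t^{(n+p-1)/n}$. Both are valid; yours is more self-contained (the isoperimetric input is reduced to a single scalar inequality on the sphere, with no auxiliary integral over $\Omega$), while the paper's route produces the intermediate estimate $\int_\Omega r^{p-1}\,dV\ge \frac{R^p}{n+p-1}\mathrm{Vol}(S(R))$, which makes the comparison with the ball, and hence the equality analysis of the volume step, somewhat more visible. Your equality discussion traces the same three conditions as the paper's ($\beta\equiv R$, star-shapedness/single intersection, radial normal), and your flagged technical points (non-injectivity of $\pi$, handled by the area formula for Lipschitz maps, and the degenerate case $q\in\partial\Omega$, handled by approximation from interior points) are real but minor; note that the paper's own proof is exposed to the same two issues and does not address them explicitly either.
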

\begin{proof}  For a point $s \in A $, let $\gamma_s$ be the unique unit speed geodesic joining $q$ and $s$ with $\gamma_s(0)=q$. Let $u= \gamma'_s(0)$ and $t_s(u) = d(q,s) $.
Let $\theta(s)$ be the angle between the outward unit normal $\nu(s)$ to $M$ and the radial vector $\partial r(s)$. Let $du$ be the spherical volume density of the unit sphere ${\mathbb{S}^{n-1}}$. Then
\begin{align*}
\int_M {r^{p}(s)} \ ds &\geq \int_A {r^{p}(s)} \ ds \\
&= \int_{\mathbb{S}^{n-1}} \left(t_s(u)\right) ^{p} \sec \theta(s) \left(t_s(u)\right) ^{n-1} du \\
& \geq \int_{\mathbb{S}^{n-1}} \left(t_s(u)\right) ^{n+p-1} du \\
&= (n+p-1) \int_{\mathbb{S}^{n-1}} \int_{0} ^ {t_s(u)} {r}^{n+p-2} dr \ du     \\
& \geq (n+p-1) \int_\Omega r^{p-1} dV 
\end{align*}
and
\begin{align} \nonumber
\int_\Omega r^{p-1} dV &= \int_{\Omega\cap B(R)}r^{p-1} dV + \int_{\Omega\setminus{\Omega\cap B(R)}}r^{p-1} dV \\ \nonumber
&= \int_{ B(R)}r^{p-1} dV -  \int_{B(R)\setminus{\Omega\cap B(R)}}r^{p-1} dV +   \int_{\Omega\setminus{\Omega\cap B(R)}}r^{p-1} dV \\ \label{inequalityonr}
&\geq \int_{ B(R)}r^{p-1} dV -  \int_{B(R)\setminus{\Omega\cap B(R)}}r^{p-1} dV +   \int_{\Omega\setminus{\Omega\cap B(R)}}R^{p-1} dV \\ \nonumber
&= \int_{ B(R)}r^{p-1} dV +  \int_{B(R)\setminus{\Omega\cap B(R)}}(R^{p-1}-r^{p-1})dV  \\ \nonumber
& \geq \int_{ B(R)}r^{p-1} dV \\ \nonumber
&= \int_{\mathbb{S}^{n-1}} \int_{0} ^{R} r^{n+p-2} dr \ du \\ \nonumber
&= \int_{\mathbb{S}^{n-1}}\frac{R^{n+p-1}}{n+p-1} du    \\ \nonumber
&= \frac{R^{p}}{n+p-1} \text{ Vol }(S(R)).
\end{align}
We have used the fact that $R \leq r \text{ in } \left( \Omega\setminus{\Omega\cap B(R)}\right)$ in \eqref{inequalityonr}.

Further, equality holds in \eqref{eqn:lemma1}  iff $\sec \theta(s)=1 \text{ for all points } s  \in M $ and $\text{ Vol} \left( {B(R)\setminus{\Omega\cap B(R)}}\right)  =0 $. Note that $\sec \theta(s)=1$ iff $ \theta(s)=0 \text{ for all points } s  \in M $. Therefore outward unit normal $\nu(s)= \partial r(s)\, \text{ for all points } s  \in M $. This shows that $ \Omega= B(q,R)$ and $M$ is a geodesic sphere of radius $R$.\end{proof}

Above lemma is the generalization of the Lemma~1 in \cite{GS1}.

\begin{lemma}
\label{lem:lemma2}
Let $n \in \mathbb{N}$ and $y_1, y_2,\ldots, y_n $ be non-negative real numbers. Then for every real number $\gamma \geq 1$, the following inequality holds.
\begin{align}
\label{eqn:lemma2}
\left( y_1 + y_2 + \cdots + y_n \right)^ {\gamma} \geq y_{1}^{\gamma} + y_{2}^{\gamma} + \cdots + y_{n}^{\gamma} .
\end{align}
\end{lemma}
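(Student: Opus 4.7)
The plan is to reduce the claim to the elementary fact that $z^\gamma \leq z$ for $z\in[0,1]$ and $\gamma \geq 1$, via a normalization.

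First I would dispose of the trivial case $y_1+\cdots+y_n=0$, in which all $y_i=0$ and both sides equal $0$. So assume $S:=y_1+\cdots+y_n>0$. Divide both sides of \eqref{eqn:lemma2} by $S^\gamma$ and set $z_i:=y_i/S$. Then $z_i\geq 0$, $\sum_{i=1}^n z_i=1$, and the desired inequality becomes
\begin{align*}
1 \;\geq\; z_1^\gamma + z_2^\gamma + \cdots + z_n^\gamma.
\end{align*}

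Since $0\leq z_i\leq 1$ and $\gamma\geq 1$, the function $t\mapsto t^\gamma$ satisfies $t^\gamma\leq t$ on $[0,1]$ (for $t=0$ it is trivial, and for $t\in(0,1]$ we have $t^{\gamma-1}\leq 1$, so $t^\gamma=t\cdot t^{\gamma-1}\leq t$). Applying this to each $z_i$ and summing gives
\begin{align*}
\sum_{i=1}^n z_i^\gamma \;\leq\; \sum_{i=1}^n z_i \;=\; 1,
\end{align*}
which is exactly what was needed. Multiplying back by $S^\gamma$ yields \eqref{eqn:lemma2}.

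There is no real obstacle here; the only thing to notice is that the inequality is scale-invariant, which is what makes the normalization step work. As an alternative one could argue by induction on $n$, reducing to the two-variable case $(a+b)^\gamma\geq a^\gamma+b^\gamma$ and then proving that case by the same normalization $t=a/(a+b)$; but the direct one-shot normalization above is more efficient.
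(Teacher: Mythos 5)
Your proof is correct and follows essentially the same route as the paper: normalize by the sum $S$ and use $t^\gamma\leq t$ on $[0,1]$ for $\gamma\geq 1$. The only difference is that you explicitly handle the degenerate case $S=0$, which the paper's division by $y_1+\cdots+y_n$ silently assumes away.
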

\begin{proof}  
Let $n \in \mathbb{N}$ and $y_1, y_2,\ldots, y_n $ be non-negative real numbers. Let $\gamma \geq 1$. Then inequality~\eqref{eqn:lemma2} can be written as
$$
\left( \frac{y_1}{ y_1 + y_2 + \cdots + y_n }\right) ^{\gamma} + \left( \frac{y_2}{ y_1 + y_2 + \cdots + y_n }\right) ^{\gamma} + \cdots + \left( \frac{y_n}{ y_1 + y_2 + \cdots + y_n }\right) ^{\gamma} \leq 1 .$$ 
Therefore, it is enough to show that
$a_{1}^{\gamma} + a_{2}^{\gamma}+\cdots + a_{n}^{\gamma} \leq 1$ for non-negative real numbers $ a_i$  such that $a_1+a_2+\cdots +a_n =1$.
 
Since $0\leq a_i \leq 1$ and $\gamma \geq 1$, then $a_{i}^{\gamma}\leq a_i$. Therefore,
$ a_{1}^{\gamma} + a_{2}^{\gamma}+\cdots + a_{n}^{\gamma} \leq a_1 +a_2 +\cdots+a_n  =1$. This proves the Lemma. 
\end{proof}

Next we estimate $ {\sum_{i=1}^{n}} \|{\nabla ^M {x_i}}\|^{2} $. 

\begin{lemma}
\label{lem:lemma3}
Let ${M}$  be a closed hypersurface in $\mathbb{R}^n$ and $\Omega$ be a bounded domain such that ${M}  = \partial \Omega$. For a fixed point $t\in \overline{\Omega}$, let $(x_1, x_2,\ldots, x_n)$ be the normal coordinate system centered at $t$. Then
$$
{\sum_{i=1}^{n}} \|{\nabla ^M {x_i}}\|^{2} = (n-1).
$$
\end{lemma}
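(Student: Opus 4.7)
The plan is to relate the intrinsic gradient $\nabla^M x_i$ on the hypersurface to the ambient Euclidean gradient, and then exploit the fact that the unit normal has length one. Since the normal coordinate system on $\mathbb{R}^n$ centered at $t$ is just the standard Cartesian coordinate system translated to have $t$ as the origin, the ambient gradient is simply $\nabla^{\mathbb{R}^n} x_i = e_i$, where $e_i$ denotes the $i$-th standard basis vector.

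The key observation is the standard decomposition of an ambient vector field along $M$ into tangential and normal components. If $\nu = (\nu_1,\ldots,\nu_n)$ is the outward unit normal to $M$, then at each point of $M$,
\begin{align*}
\nabla^M x_i = \nabla^{\mathbb{R}^n} x_i - \langle \nabla^{\mathbb{R}^n} x_i, \nu\rangle \, \nu = e_i - \nu_i \, \nu.
\end{align*}
Taking squared norms gives $\|\nabla^M x_i\|^2 = 1 - 2\nu_i^2 + \nu_i^2 \|\nu\|^2 = 1 - \nu_i^2$.

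Summing over $i$ from $1$ to $n$ and using $\sum_{i=1}^n \nu_i^2 = \|\nu\|^2 = 1$ yields
\begin{align*}
\sum_{i=1}^{n} \|\nabla^M x_i\|^2 = n - \sum_{i=1}^{n} \nu_i^2 = n - 1,
\end{align*}
which is the claimed identity.

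There is no real obstacle here; the statement is essentially a pointwise identity that follows from the Pythagorean decomposition of $e_i$ into its tangential and normal parts along $M$. The only thing to be careful about is recognising that the phrase \emph{normal coordinate system centered at $t$} in the flat ambient space $\mathbb{R}^n$ just means a translated Cartesian frame, so that $\nabla^{\mathbb{R}^n} x_i = e_i$ with no correction terms.
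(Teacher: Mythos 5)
Your proof is correct and follows essentially the same route as the paper: both decompose the ambient gradient $\nabla x_i = e_i$ into tangential and normal parts along $M$ and use the Pythagorean identity $\|\nabla^M x_i\|^2 = \|\nabla x_i\|^2 - \langle \nabla x_i, \nu\rangle^2$ together with $\|\nu\|=1$. Your version just writes out the projection explicitly before taking norms, which is a cosmetic difference only.
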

\begin{proof} Observe that $\|{\nabla {x_i}(p)}\| = 1$ for $1 \leq i \leq n$ and a point $p \in \mathbb{R}^n$. Let $\nu$ be the outward unit normal on $M$. Then
\begin{align*}
{\sum_{i=1}^{n}} \|{\nabla ^M {x_i}}\|^{2} &= {\sum_{i=1}^{n}} \left(  \|{\nabla {x_i}}\|^{2} - {\langle \nabla {x_i} , \nu \rangle}^2 \right) \\
&= {\sum_{i=1}^{n}}  \|{\nabla {x_i}}\|^{2} - \|\nu\|^2 \\
&= n-1.
\qedhere
\end{align*} 
\end{proof}

For a Riemannian geometric proof of above lemma, see \cite{EH}.

\section{Proof of Theorem \ref{thm:test function}}

\begin{proof} Given a point $x \in \mathbb{R}^n$, we write  $\left(x_1,\ldots,x_n \right)$, the standard Euclidean coordinate system centered at origin. For $1<p<\infty$, define a function   $f: \overline{\Omega} \to \mathbb{R}$  by
$$
f\left( t_1, \ldots,t_n\right) =\frac{1}{p} \int_M{\sum_{i=1}^{n} |x_i - t_i|^{p}} \, dx_1 \, \cdots \, dx_n.
$$
The function $f$ is non-negative on $\overline{\Omega}$. Let $\alpha$ be its infimum. Then there exists a sequence $\left(t_{1}^{j}, \ldots,t_{n}^{j} \right) $ in $\overline{\Omega}$ such that
\begin{align}
\label{eqn:test function1}
\frac{1}{p} \int_M{\sum_{i=1}^{n} |x_i - t_{i}^{j}|^{p}} \longrightarrow \alpha \quad \text{ in } \mathbb{R} \quad \text{ as } j \longrightarrow \infty.
\end{align}
Observe that the sequence $\left( t_{1}^{j}, \ldots, t_{n}^{j}\right) $ is bounded. Therefore it has a convergent subsequence, without loss of generality, we denote it by $\left( t_{1}^{j}, \ldots, t_{n}^{j}\right) $ itself, which converges to  $t = \left( t_1,\ldots,t_n\right) \in \mathbb{R}^n$. Thus $t \in \overline{\Omega}$. Then

\begin{align*}
 \sum_{i=1}^{n} |x_i - t_{i}^{j}|^{p} &\longrightarrow   \sum_{i=1}^{n} |x_i - t_{i}|^{p} \qquad as \quad j \longrightarrow \infty \\
\mbox{ and } \qquad \frac{1}{p} \int_M {\sum_{i=1}^{n} |x_i - t_{i}^{j}|^{p}} &\longrightarrow \frac{1}{p} \int_M{\sum_{i=1}^{n} |x_i - t_{i}|^{p}} \qquad as \quad j \longrightarrow \infty.
\end{align*}
Therefore,

$$\frac{1}{p} \int_M {\sum_{i=1}^{n} |x_i - t_{i}|^{p}} = \alpha \qquad \mbox{ and }  \qquad
f\left(t_1,\ldots,t_n \right) = \alpha.$$ 

Since $f$ attains its minimum at $t= \left(t_1,\ldots,t_n \right)$, we have  $\left(\nabla f \right)_{t} = 0$. Therefore for each $1\leq i\leq n$,
\begin{align*}
\langle \nabla f, e_i \rangle_{(t_1,\ldots,t_n)} = \int_{M} |X_i|^{p-2} X_i =0,
\end{align*}
where $\left\lbrace e_i, 1 \leq i \leq n\right\rbrace $ is the standard orthonormal basis of $\mathbb{R}^n$ and $X_i:= (x_i - t_i), 1 \leq i \leq n$.
This proves the theorem. 
\end{proof}
 We will use the above theorem to show the existence of a point ${t}=\left( t_1,\ldots,t_n\right)\in \overline{\Omega} $, such that the coordinate functions with respect to $t$ are test functions for the eigenvalue problems \eqref{closedeigenvalueproblem} and \eqref{stekloveigenvalueproblem}.

\section{Proof of Theorem \ref{thm:closedfe}}
\begin{proof} Let ${M}$  be a closed hypersurface in $\mathbb{R}^n$ and $\Omega$ be the bounded domain such that ${M}=\partial \Omega$. Let $R>0$ be such that $\text{Vol }(\Omega)=\text{ Vol }(B(R))$.
The variational characterization for $\lambda_{1,p}$ is given by
\begin{align*}
\lambda_{1,p}= \text{ inf } \left\lbrace \frac{\int_{M}{\|{\nabla ^M {u}}\|^p}}{\int_M{|u|^p}} : \int_M{|u|^{p-2} u =0, u (\neq 0)\in C^1(M)}\right\rbrace.
\end{align*}
By Theorem \ref{thm:test function}, there exists a point $t \in \overline{\Omega}$  such that
\begin{align*}
\int_M{|x_i|^{p-2} x_i} = 0 \qquad \text{ for } 1 \leq i\leq n,
\end{align*}
where $\left(x_1,\ldots,x_n \right)$  denotes the normal coordinate system centered at $t$. 
Therefore, for all $p>1$,
\begin{align}
\label{eqn:closedfep1}
\lambda_{1,p} \int_M  {\sum_{i=1}^{n}|{x_i}|^p} \leq \int_M  \sum_{i=1}^{n} \|{\nabla ^M {x_i}}\|^p \quad \text{ for } 1 \leq i\leq n.
\end{align}
Now, we divide the proof of the theorem into the following two cases.\\
$\mathbf{Case \, 1}$.  $1 < p \leq 2$. \\
 Since $\vert \frac{x_i}{r}\vert \leq 1$, it follows that
\begin{align}
\label{eqn:closedfep4}
|x_i|^p = {r^p \left|\frac{x_i}{r}\right|^p} \geq {r^p \left|{\frac{x_i}{r}}\right|^2 } \quad \text{ for } 1 \leq i\leq n.
\end{align}
Therefore,
\begin{align*}
r^{p} = r^{p}\sum_{i=1}^{n} \left|\frac{x_i}{r}\right|^2 \leq r^{p}\sum_{i=1}^{n} \left|\frac{x_i}{r}\right|^{p} = \sum_{i=1}^{n} {|x_i|^p}.
\end{align*}
For $1 < p < 2$, using H$\ddot{o}$lder's inequality, we obtain
\begin{align*}
\sum_{i=1}^{n} {\|{\nabla ^M {x_i}}\|^p} \leq  \left(  {\sum_{i=1}^{n}  \|{\nabla ^M {x_i}}\|^2 }\right)  ^ \frac{p}{2}  n^\frac{2-p}{2}.
\end{align*}
This combining with  Lemma \ref{lem:lemma3} gives
$$
\sum_{i=1}^{n} {\|{\nabla ^M {x_i}}\|^p}\leq (n-1)^{\frac{p}{2}} n^\frac{2-p}{2}.
$$
Observe that the above inequality is also true for $p=2$. 
By substituting above values in inequality \eqref{eqn:closedfep1}, we get
\begin{align}
\label{eqn:closedfep5}
\lambda_{1,p} {\int_M {r^p}} \leq (n-1)^{\frac{p}{2}} \ {n^\frac{2-p}{2}} \, \text{Vol}(M).
\end{align}
By substituting  ${\int_M {r^{p}}} \geq {R^P} \,\text{Vol}(S(R)) $ in above equation, we get
$$
\lambda_{1,p} \, {R^P} \, \text{Vol}(S(R))  \leq  (n-1)^{\frac{p}{2}} \, {n^\frac{2-p}{2}} \, \text{Vol}(M). $$
As a consequence, we have
$$
\lambda_{1,p}  \leq  {n}^\frac{2-p} {2}\, {\lambda_1(S(R))}^\frac{p}{2} \, \left(\frac{\text{Vol}(M)}{\text{Vol}(S(R))}\right).
$$
This proves Theorem \ref{thm:closedfe} for $1 < p \leq 2$. 

Equality in (\ref{eqn:closedfe1}) implies equality in Lemma \ref{lem:lemma1} and equality in (\ref{eqn:closedfep4}), which implies that $M$ is a geodesic sphere of radius $R$ and $p=2$. \\
$\mathbf{Case \, 2}$. $p \geq 2$. \\
 For $p > 2$,
by H$\ddot{o}$lder's inequality, we have
\begin{align*}
 \sum_{i=1}^{n} {|x_i|^2} &\leq  \left(  {\sum_{i=1}^{n} \left( |x_i|^2 \right) ^ \frac{p}{2}}\right)  ^ \frac{2}{p}  n^\frac{p-2}{p}.
\end{align*}
Therefore,
\begin{align}
\label{eqn:closedfep2}
  n^\frac{2-p}{2} {r}^p &\leq  \sum_{i=1}^{n} {|x_i|^p}.
\end{align}
Observe that equality holds in the above inequality for $p=2$, so  (\ref{eqn:closedfep2}) holds for $p \geq 2$. 
Now we estimate $\sum_{i=1}^{n} \|{\nabla ^M {x_i}}\|^p$. Since $\frac{p}{2} \geq 1$ and $\|{\nabla ^M {x_i}}\|^2 \geq 0$, for each $1 \leq i \leq n$, it follows from Lemma \ref{lem:lemma2} that
\begin{align} \nonumber
 \sum_{i=1}^{n} \|{\nabla ^M {x_i}}\|^p &= {\sum_{i=1}^{n} \left( \|{\nabla ^M {x_i}}\|^2\right) ^\frac{p}{2}} \\ \nonumber
 &\leq    \left(  \sum_{i=1}^{n} \|{\nabla ^M {x_i}}\|^2\right)                                                                                                                                                                                                                                                                                                                                                                                                                                                                    ^\frac{p}{2} \\  \label{eqn:closedfep6}
 &= {(n-1)}^{\frac{p}{2}}.
\end{align}
The last inequality follows from Lemma \ref{lem:lemma3}. By substituting values from (\ref{eqn:closedfep2}) and (\ref{eqn:closedfep6}) in (\ref{eqn:closedfep1}), we get

\begin{equation}
\label{eqn:closedfep3}
\lambda_{1,p}\,{n^\frac{2-p}{2}}{\int_M {r^p}} \leq (n-1)^\frac{p}{2} \, \text{Vol}(M).
\end{equation}

 By substituting ${\int_M {r^{p}}} \geq {R^p} \,\text{Vol}(S(R)) $ from Lemma \ref{lem:lemma1} in above inequality, we have
 $$
 \lambda_{1,p}\,{n^\frac{2-p}{2}}\,{R^p} \,\text{Vol}(S(R)) \leq (n-1)^\frac{p}{2} \, \text{Vol}(M).$$ 
 Therefore,
 $$
\lambda_{1,p}  \leq  {n}^\frac{p-2} {2}\, {\lambda_1(S(R))}^\frac{p}{2} \, \left(\frac{\text{Vol}(M)}{\text{Vol}(S(R))}\right).
$$ 
This proves Theorem \ref{thm:closedfe} for $p \geq 2 $.

If equality holds in (\ref{eqn:closedfe1}), then equality holds in \eqref{eqn:lemma1} and also in  \eqref{eqn:closedfep2}. Equality in \eqref{eqn:lemma1} implies that $M$ is a geodesic sphere of radius $R$ and equality in \eqref{eqn:closedfep2} holds iff $p=2$. Otherwise, $p > 2$ and equality in \eqref{eqn:closedfep2} implies that
$|x_i| = c$, for some constant $c$ and $1\leq i\leq n$.
Therefore, each point of $M$ is of the form $(\pm c,\pm c,\pm c,\ldots,\pm c)$, for some constant $c$. This contradicts our assumption that $M$ is the boundary of a bounded domain $\Omega$.
\qedhere
\end{proof}




\section{Proof of Theorem \ref{thm:steklovfe}}
\begin{proof} Let $\Omega$ be a bounded  domain in $\mathbb{R}^n$ with smooth bounday $\partial \Omega = M$ and $R>0$ be such that $\text{Vol}(\Omega)=\text{Vol}(B(R))$, where $B(R)$ is a ball of radius $R$. The variational characterization for $\mu_{1,p}$ is given by
\begin{align*}
\mu_{1,p}= \text{ inf } \left\lbrace \frac{\int_\Omega{\|\nabla u\|^p}}{\int_M{|u|^p}} : \int_M{|u|^{p-2} u =0, u(\neq 0) \in C^1(\Omega)}\right\rbrace.
\end{align*} 
By Theorem \ref{thm:test function}, there exists a point $t \in \overline{\Omega}$  such that
\begin{align*}
\int_M{|x_i|^{p-2} x_i} = 0, \qquad \text{for all} \quad 1 \leq i \leq n,
\end{align*}
where $\left(x_1,x_2,x_3,\ldots,x_n \right)$  denotes the normal coordinate system centered at $t$. 
By considering each ${x_i}$ as test function, we have
\begin{align}
\label{eqn:steklovfep1}
\mu_{1,p} \int_M  {\sum_{i=1}^{n}|{x_i}|^p} \leq \int_\Omega  \sum_{i=1}^{n} \|\nabla  x_i\|^p.
\end{align}
Now we consider the following two cases to prove the theorem. \\
$\mathbf{Case \, 1}$.  $1 < p \leq 2$. \\
By similar argument as in (\ref{eqn:closedfep4}), we get
\begin{align*}
r^p &\leq \sum_{i=1}^{n} {|x_i|^p}.
\end{align*}
By H$\ddot{o}$lder's inequality,
\begin{align*}
\sum_{i=1}^{n} {\|\nabla {x_i}\|^p} &\leq  \left(  {\sum_{i=1}^{n}  \|\nabla{x_i}\|^2 }\right)  ^ \frac{p}{2}  n^\frac{2-p}{2} = n.
\end{align*}
By substituting above values in \eqref{eqn:steklovfep1}, we get
\begin{align*}
\mu_{1,p}  \int_{M} {r}^p \leq n \ \text{ Vol}(\Omega).
\end{align*}
By substituting ${\int_M {r^{p}}} \geq {R^P} \,\text{Vol}(S(R)) $ from Lemma \ref{lem:lemma1}, we have
\begin{align*}
\mu_{1,p} \ {R}^p \, \text{Vol}(S(R)) \leq n \, \text{Vol}(\Omega).
\end{align*}
Since $ \text{Vol}(\Omega)=  \text{Vol}(B(R))$ and $\frac{\text{ Vol}(B(R))}{\text{ Vol}(S(R))} = \frac{R}{n}$, we get
\begin{align*}
\mu_{1,p} \leq \frac{1}{R^{p-1}}.
\end{align*}
$\mathbf{Case \, 2}$. $p \geq 2$. \\
From (\ref{eqn:closedfep2}), we have
\begin{align*}
 n^\frac{2-p}{2} {r}^p &\leq  \sum_{i=1}^{n} {|x_i|^p} \quad \text{  for all } \, p \geq 2.
\end{align*}
By Lemma \ref{lem:lemma2}, we have
\begin{align*}
\sum_{i=1}^{n} \|\nabla x_i\|^p &\leq {\sum_{i=1}^{n} \left( \|\nabla x_i\|^2\right) ^\frac{p}{2}} \\
 &\leq    \left(  \sum_{i=1}^{n} \|\nabla x_i\|^2\right)^\frac{p}{2} \\
 &\leq {n}^{\frac{p}{2}}.
\end{align*}
By substituting above values in (\ref{eqn:steklovfep1}), we get
\begin{align*}
\mu_{1,p} \ n^\frac{2-p}{2} \int_{M} {r}^p \leq n^\frac{p}{2} \ \text{ Vol }(\Omega).
\end{align*}
We use Lemma \ref{lem:lemma1} again to get
\begin{align*}
\mu_{1,p} \ n^\frac{2-p}{2} R^p \ \text{Vol}(S(R)) \leq n^\frac{p}{2} \ \text{Vol}(\Omega).
\end{align*}
Since $\text{Vol}(\Omega) = \text{Vol}(B(R))$ and $\frac{\text{Vol}(B(R))}{\text{Vol}(S(R))} = \frac{R}{n}$, above equation becomes
\begin{align*}
\mu_{1,p} \leq \frac{n^{p-2}}{R^{p-1}}.
\end{align*}
Equality case will follow same as in Theorem \ref{thm:closedfe}.
This completes the proof.
\end{proof}

\section*{Acknowledgment}
 I am very grateful to Prof. G. Santhanam for his support and constructive suggestions which led to improvements in the article.

\end{document}